%%%%%%%%%%%%%%%%%%%%%%%%%%%%%%%%%%%%%%%%%%%%%%%%%%%%%%%%%%%%%%
\documentclass[12pt,a4paper,twoside]{amsart}
\usepackage{a4wide, charter, amsmath,amsthm, amsbsy,amsfonts,amssymb, bm, manfnt}
\usepackage[mathscr]{eucal}
\setcounter{page}{1}
\usepackage{mathrsfs,graphicx, amscd, tikz-cd, cancel}
\usetikzlibrary{matrix,arrows,decorations.pathmorphing}
\usepackage[normalem]{ulem}

\usepackage{amssymb}

\def\CC{{\mathbb C}}

\def\ZZ{{\mathbb Z}}

\def\Ecal{{\mathcal E}}

\def\Ocal{{\mathcal O}}

\def\Pcal{{\mathcal P}}

\def\Xcal{{\mathcal X}}

\def\gfrak{{\mathfrak g}}

\def\At{{\mathcal A t}}
\def\pt{{\scriptscriptstyle\bullet}}

\newcommand\even{\textrm{even}}

\newcommand\at{\operatorname{\mathfrak{at}}}
\newcommand\ad{\operatorname{ad}}
\newcommand\Ad{\operatorname{Ad}}
\newcommand\Aut{\operatorname{Aut}}

\newcommand\Hom{\mathcal{H}om}
\newcommand\HHom{\operatorname{Hom}}

\newcommand\spec{\operatorname{Spec}}
\newcommand\sym{\operatorname{Sym}}
\newcommand\Out{\operatorname{Out}}
\newcommand\PSL{\operatorname{PSL}}

\newcommand\slin{\operatorname{\mathfrak{sl}}}

\newtheorem{theorem}{Theorem}[section]
\newtheorem{proposition}[theorem]{Proposition}
\newtheorem{lemma}[theorem]{Lemma}
\newtheorem{corollary}[theorem]{Corollary}

\theoremstyle{definition}

\newtheorem{remark}[theorem]{Remark}

\numberwithin{equation}{section}

\usepackage[
	hypertexnames=false,
	hyperindex,
	pagebackref,
	pdftex,
	breaklinks=true,
	bookmarks=false,
	colorlinks,
	linkcolor=blue,
	citecolor=red,
	urlcolor=red,
]{hyperref}

\begin{document}
\baselineskip=15.5pt

\title[Characteristic forms for families of local systems]{Characteristic forms for
holomorphic families of local systems}

\author[I. Biswas]{Indranil Biswas}

\address{Department of Mathematics, Shiv Nadar University, NH91, Tehsil Dadri,
Greater Noida, Uttar Pradesh 201314, India}

\email{indranil.biswas@snu.edu.in, indranil29@gmail.com}

\author[E. Looijenga]{Eduard Looijenga}

\address{Mathematisch Instituut Universiteit Utrecht, Netherlands and Dept.\ of Mathematics, University of Chicago, USA
}

\email{e.j.n.looijenga@uu.nl}

\subjclass[2010]{14J42, 53C07, 53B10, 57R20}

\keywords{Holomorphic connection, characteristic form, projective structure, symplectic form.}

\date{}

\begin{abstract}
Let $\Gamma$ be a finitely generated group and $G$ a complex Lie group with Lie algebra $\gfrak$. The complex variety of group 
homomorphisms $\HHom (\Gamma,\, G)$ is equipped with an action of $G$ by inner automorphism and thus it defines a complex-analytic 
stack. We associate to each $\Phi\,\in\, (\sym^n \gfrak)^G$ a closed holomorphic $n$-form on this stack with values in 
$H^n(\Gamma,\, \CC)$. This construction is multiplicative so that we get a graded algebra homomorphism. We obtain this as a universal 
refinement of a Chern-Weil homomorphism.

If we take $n\,=\,2$ and let $\Gamma$ be the fundamental group of a closed orientable surface, then we recover a classical 
construction due to Goldman. 
\end{abstract}

\maketitle

\hfill{\textsl{To Ezra, on the occasion of his 60th birthday.}}

\section*{Introduction}
Let $\Gamma$ be the fundamental group of closed connected orientable surface of genus $>1$, $G$ a Lie group and 
$\sigma$ a nondegenerate bilinear form on its Lie algebra $\gfrak$ that is invariant under the adjoint representation. A well-known theorem of
Goldman \cite{Go} asserts that $\sigma$ determines on the space of group homomorphisms $\Gamma\,\longrightarrow\, G$
given up to $G$-conjugacy (understood in the sense of geometric invariant theory) a symplectic form. One of the main goals of this paper is to show that this is a special case of what one might call a \emph{universal relative Chern-Weil homomorphism}. 

In more detail, let us first recall the Chern-Weil recipe for extracting topological invariants from a principal $G$-bundle 
$\Pcal\,\longrightarrow\, M$ over manifold a $M$. One begins with choosing a connection $\nabla$ on the bundle $\Pcal$ over $M$. Its 
curvature form $R(\nabla)$ is then a $2$-form on $M$ that takes values in an associated bundle of Lie algebras 
$\ad(\Pcal)\,=\,\Pcal\times^G \gfrak$. A fiber of $\ad(\Pcal)$ is isomorphic to the Lie algebra $\gfrak$ of $G$ with 
the isomorphism given up to the adjoint action of $G$ on $\gfrak$ and so it makes sense to evaluate on $R(\nabla)$ a 
homogeneous polynomial function on $\gfrak$ that is invariant under this adjoint action of $G$, the result then being a 
differential form on $M$ of degree twice that of the polynomial. The vector bundle $\ad(\Pcal)$ inherits a connection from $\Pcal$ and the 
Bianchi identity, which says that $R(\nabla)$ is closed with respect to the inherited connection, implies that such a 
differential form is closed. A simple homotopy argument shows that its de Rham cohomology class is independent of the choice 
of $\nabla$ and we thus obtain the Chern-Weil homomorphism $\CC[\gfrak]^G\,\longrightarrow\, H^{\even}(M;\, \CC)$.

The relative version we alluded to begins (in the complex-analytic setting, say) with a proper holomorphic submersion $f\,:\,\Xcal\,\longrightarrow\, S$
of complex manifolds and takes for $G$ a complex linear algebraic group (so with a complex Lie algebra $\gfrak$), a holomorphic principal $G$-bundle
$\Pcal$ over $\Xcal$ and a holomorphic connection $\nabla$ on $\Pcal$ \emph{relative to} $f$ (it is given by a holomorphic
splitting of the relative Atiyah exact sequence, which means that only covariant derivation along the fibers of
$f$ is defined). We assume that $\nabla$ is flat and so this can be thought of 
as a holomorphically varying family of compact complex manifolds endowed with a holomorphic principal 
$G$-bundle with flat connection. We show that a refinement of the Chern-Weil homomorphism 
yields a graded algebra homomorphism 
\[
\Psi_{(f, \Pcal, \nabla)}\, :\, \CC[\gfrak]^G\,\longrightarrow\, \bigoplus_{n\ge 0} 
H^0(S,\,\Omega^n_{S,cl}\otimes R^nf_*\CC),
\]
where $\Omega^n_{S,cl}$ stands for the sheaf
of closed holomorphic $n$-forms on $S$.

Our generalization of Goldman's construction is in a sense a universal homomorphism of this type. It is stated in terms of the 
\emph{character space} that is associated with a finitely generated group $\Gamma$ (that takes the role of the fundamental group of a 
fiber of $f$) and a complex Lie group $G$ as above: this is the complex-analytic space of group homomorphisms $\HHom (\Gamma, \,G)$. It comes 
with an evident action of $\Aut(\Gamma)\times \Aut(G)$.% In particular, $G$ acts on it via inner automorphisms. 
The following theorem 
(stated here only in the holomorphic setting) is, we believe, the most natural generalization of the main result of \cite{Go}.

\begin{theorem}\label{thm:main}
There is a natural graded $\CC$-algebra homomorphism $\Psi_{\Gamma, G}$
from $\CC[\gfrak]^G$ to the algebra of $\Aut(\Gamma)$-invariant sections of $\bigoplus_{n\ge 0}\Omega^n_{\HHom (\Gamma, G), cl}\otimes H^n(\Gamma,\, {\mathbb C})$
that are killed by contraction with the vector fields defined by $\gfrak$. It is universal as a (refined) Chern-Weil homomorphism in the sense that
is explained below. 
\end{theorem}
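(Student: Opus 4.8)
The plan is to manufacture $\Psi_{\Gamma,G}$ from a single tautological object on $X := \HHom(\Gamma,G)$ and then read off every asserted property from its formal behaviour. For each $\gamma\in\Gamma$ let $\operatorname{ev}_\gamma\colon X\to G$ be the evaluation $\rho\mapsto\rho(\gamma)$ and put $\theta_\gamma:=\operatorname{ev}_\gamma^*\theta_G$, the pullback of the right-invariant Maurer--Cartan form; this is a holomorphic $\gfrak$-valued $1$-form on $X$. The first step is to observe that $\operatorname{ev}_{\gamma_1\gamma_2}=\operatorname{ev}_{\gamma_1}\cdot\operatorname{ev}_{\gamma_2}$ forces $\theta_{\gamma_1\gamma_2}=\theta_{\gamma_1}+\Ad(\operatorname{ev}_{\gamma_1})\theta_{\gamma_2}$, so that $\gamma\mapsto\theta_\gamma$ is a group $1$-cocycle with values in $\Omega^1_X\otimes\gfrak$, the coefficients carrying the adjoint $\Gamma$-action twisted by the universal representation. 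This is precisely the identification $T_\rho X=Z^1(\Gamma,\gfrak_{\Ad\rho})$ differentiated over $X$, with $\theta$ the tautological cocycle. A degree-$n$ element $\Phi\in\CC[\gfrak]^G$ is an $\Ad$-invariant symmetric $n$-linear form, hence a morphism of $\Gamma$-modules $\gfrak^{\otimes n}\to\CC$ for the diagonal action, and I would set
\[
\Psi_{\Gamma,G}(\Phi):=\Phi\bigl(\theta\cup\cdots\cup\theta\bigr)\in\Omega^n_X\otimes C^n(\Gamma,\CC),
\]
using the cup product of group cochains (which builds in the module twist) on the $\Gamma$-factor and the wedge on the form factor.

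The homological heart, and the step I expect to be the main obstacle, is to show that this element is closed in both directions at once, so that it represents a class in $\Omega^n_{X,cl}\otimes H^n(\Gamma,\CC)$. I would organise this through the bar--de Rham double complex $\Omega^p_X\otimes C^q(\Gamma,\gfrak)$ with its two differentials and a bracket combining wedge and cup, the point being that $\theta$ is a Maurer--Cartan element for this structure: the relation $d_X\theta_\gamma=\tfrac12[\theta_\gamma,\theta_\gamma]$ together with the cocycle property packages exactly the statement that the universal relative connection is flat along the $\Gamma$-direction but has curvature in the $X$-direction. Granting this, closedness under the group differential is the statement that a cup product of cocycles is a cocycle and that $\Phi$ is a module map, while $d_X$-closedness is the usual Chern--Weil computation: $d_X\Phi(\theta\cup\cdots\cup\theta)$ is a sum of terms of the form $\Phi(\ldots,[\theta,\theta],\ldots)$ which cancel by infinitesimal $\Ad$-invariance of $\Phi$ (the case $n=1$, where $d_X\Phi(\theta_\gamma)=\tfrac12\Phi([\theta_\gamma,\theta_\gamma])=0$, is already instructive). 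The genuinely laborious part is the sign bookkeeping in the combined wedge/cup bracket and the verification of the total Maurer--Cartan equation; conceptually this is nothing but the Chern--Weil homomorphism for the universal flat bundle on $X\times B\Gamma$, with $\Psi_{\Gamma,G}(\Phi)$ the bidegree-$(n,n)$ Künneth component of $\Phi(R)$, and I would use a simplicial (Bott--Shulman--Dupont) de Rham model of $B\Gamma$ to make this rigorous even though $B\Gamma$ is not a manifold.

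With the construction in place the remaining properties are formal. Multiplicativity follows from $\theta^{\cup(m+n)}=\theta^{\cup m}\cup\theta^{\cup n}$ and the polarization identity matching the product of invariant polynomials with the shuffle symmetrization in the cup product, giving $\Psi_{\Gamma,G}(\Phi\Phi')=\Psi_{\Gamma,G}(\Phi)\cup\Psi_{\Gamma,G}(\Phi')$; since each $\Psi_{\Gamma,G}(\Phi)$ carries equal, hence even total, degree in the two factors, this is compatible with the graded-commutative product on the target and with the commutativity of $\CC[\gfrak]^G$, so $\Psi_{\Gamma,G}$ is a graded $\CC$-algebra homomorphism. For $\Aut(\Gamma)$-invariance I would use naturality of $\theta$: an automorphism acts on $X$ by precomposition and on $H^n(\Gamma,\CC)$ functorially, and these two actions cancel, so $\Psi_{\Gamma,G}(\Phi)$ lies in the diagonal invariants. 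Finally, to see that the class is killed by contraction with the fundamental vector field $V_\xi$ of $\xi\in\gfrak$ for the conjugation action of $G$ on $X$, note that $V_\xi$ corresponds to the \emph{coboundary} cocycle $\gamma\mapsto\xi-\Ad(\operatorname{ev}_\gamma)\xi$, so $\iota_{V_\xi}\theta=\delta_\Gamma\xi$ up to sign; by the Leibniz rule, the symmetry of $\Phi$, and the fact that $\theta^{\cup(n-1)}$ is a cocycle, $\iota_{V_\xi}\Psi_{\Gamma,G}(\Phi)$ equals $\pm\,\delta_\Gamma\Phi(\xi\cup\theta^{\cup(n-1)})$, a group coboundary, hence vanishes in $H^n(\Gamma,\CC)$.

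This last horizontality is exactly what is needed for the form to descend to the quotient stack, and it is also what connects the construction to the relative Chern--Weil homomorphism of the Introduction. For any proper holomorphic submersion $f\colon\Xcal\to S$ carrying a flat relative $G$-bundle, taking $\Gamma=\pi_1$ of a fibre and passing to holonomy yields a classifying map $S\to\HHom(\Gamma,G)$ (well defined up to the conjugation action), and under the resulting identification of $R^nf_*\CC$ with the pullback of the constant system $H^n(\Gamma,\CC)$ one checks that $\Psi_{(f,\Pcal,\nabla)}$ is the pullback of $\Psi_{\Gamma,G}$; this is the precise sense of universality to be spelled out below. Specialising to $n=2$ with $\Gamma$ a surface group and contracting the $H^2(\Gamma,\CC)\cong\CC$ factor against the fundamental class recovers Goldman's symplectic form. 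I therefore expect the only serious difficulty to be the double-complex bookkeeping of the second paragraph; everything else is a formal consequence of $\theta$ being the tautological cocycle.
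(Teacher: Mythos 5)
Your construction of the form itself coincides with the paper's: the tautological cocycle $\theta_\gamma=\operatorname{ev}_\gamma^*\theta_G$ is exactly the right-translation identification of $T_\rho\HHom(\Gamma,G)$ with $Z^1(\Gamma,\Ad\rho)$ used in Subsection \ref{subsect:charstack}, and $\Phi(\theta^{\cup n})$ induces on cohomology the same $H^n(\Gamma,\CC)$-valued $n$-form $\eta_\Phi$ as Proposition \ref{prop:eta}. Your treatment of multiplicativity and $\Aut(\Gamma)$-invariance is fine, and your horizontality argument --- $\iota_{V_\xi}\theta=\pm\delta_\Gamma\xi$ is a group coboundary, hence so is $\iota_{V_\xi}\Phi(\theta^{\cup n})$ because $\theta^{\cup(n-1)}$ is a $\delta_\Gamma$-cocycle and $\Phi$ is a map of $\Gamma$-modules --- is a clean cochain-level version of the paper's argument on $G^p=\HHom(F_p,G)$, where horizontality comes from $B^1$ dying in $H^1$.

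Where you genuinely diverge from the paper is the closedness of $\eta_\Phi$, and that is also where your proposal has a real gap. You propose a direct computation in the bar--de Rham double complex $\Omega^p\otimes C^q(\Gamma,-)$, i.e.\ simplicial Chern--Weil theory for the tautological flat bundle over $\HHom(\Gamma,G)\times B\Gamma$; this is a viable route (for $n=2$ it amounts to the known direct group-cohomology proofs of closedness of Goldman's form), but your stated mechanism --- that $d\,\Phi(\theta^{\cup n})$ is a sum of terms $\Phi(\ldots,[\theta,\theta],\ldots)$ cancelling by infinitesimal invariance --- is only literally correct for $n=1$. For $n\ge 2$ the de Rham differential also hits the twists $\Ad(\operatorname{ev}_{\gamma_1\cdots\gamma_i})$ hidden in the cup product, and the cancellation one can hope for holds only modulo a $\delta_\Gamma$-coboundary (which suffices, since $\delta_\Gamma$-closedness holds on the nose, but it is exactly the bookkeeping you defer). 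So the decisive step is correctly identified but not carried out. The paper avoids this computation altogether: it proves closedness of the relative form $\Phi(\At^\nabla_{\Xcal/S}(\Pcal))$ geometrically via the Bianchi identity (Lemma \ref{lemma:Atrel} and Corollary \ref{cor2:}), identifies $\kappa^*\rho^*\eta_\Phi$ with that form in Proposition \ref{prop:comparison}, and then deduces closedness of $\eta_\Phi$ itself by realizing $\Gamma$ as the fundamental group of a compact complex manifold (Taubes) or of a regular neighborhood of a finite $2$-complex (Corollary \ref{cor:closed}). Your route, if completed, would supply the direct proof the authors explicitly say they would prefer to have (Remark \ref{rem3:}); as written, the Maurer--Cartan/Bianchi verification in the double complex is the missing piece, and it is precisely where the signs and the interaction of the $\Ad$-twist with the de Rham differential must be checked. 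The universality statement at the end is only sketched, but no more so than the paper's own proof of Proposition \ref{prop:comparison}, which is left to the reader.
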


Let us first comment on the equivariance properties of $\Psi_{\Gamma, G}$. Naturality of $\Psi_{\Gamma, G}$ is here understood as 
contravariance in $G$ and covariance in $\Gamma$. Since $\Aut(\Gamma)$ and the inner automorphisms of $ G$ act trivially on $\CC[\gfrak]^G$, 
this implies that $\Psi_{\Gamma, G}$ takes values in forms that are $\Aut(\Gamma)\times G$-invariant (the $\Gamma\times G$-invariance also easily seen directly). So 
we may just as well say that $\Psi_{\Gamma, G}$ is $\Out(G)$-equivariant and takes values in $\Out(\Gamma)$-invariant forms.

In order to explain its universal nature, it is best to reformulate the first assertion in terms of the \emph{character stack}
$\underline{\chi}_G(\Gamma)$ defined by the $G$-action on $\HHom (\Gamma,\, G)$. A \emph{regular form} on this stack $\underline{\chi}_G(\Gamma)$ is by
definition a regular form on the space of group homomorphisms $\HHom(\Gamma,\, G)$ (which comes with the structure of a 
complex-analytic space) that is \emph{basic} with respect to the $G$-action. This means that both the form and its exterior derivative are killed by $\gfrak$. So in the above theorem we can replace 
$\Omega^n_{\HHom (\Gamma, G)}$ by $\Omega^n_{\underline{\chi}_G(\Gamma)}$ and regard $\Psi_{\Gamma, G}$ as a graded, $\Out(G)$-equivariant algebra homomorphism 
\[
\underline \Psi_{\Gamma, G}\, :\,\CC[\gfrak]^G\,\longrightarrow\, \big(\bigoplus_{n\ge 0}H^0(\underline\chi_G(\Gamma),\,\Omega^n_{\underline\chi_G(\Gamma), cl})
\otimes H^n(\Gamma,\, {\mathbb C})\big)^{\Out(\Gamma)}.
\]
Now suppose that we are given a triple $(f :\Xcal\to S, \,\Pcal,\, \nabla)$ as above with connected fibers. Since our interest is focused on
a property that is local on $S$, we shall assume that $S$ is simply connected and $f$ admits a section $x\,:\,S\,\longrightarrow\, \Xcal$. This implies that for each $s\in S$, the inclusion map $X_s\,\subset\,\Xcal$ induces an isomorphism on fundamental groups. Let us write $\Gamma$ for $\pi_1(\Xcal,\, x)$, where we think of $x$ as a (relative) base point. 
We then have a natural homotopy class of maps $\kappa$ from $\Xcal$ to a classifying space $B\Gamma$ of $\Gamma$. The associated map
$$\kappa^*\,:\, H^\pt (\Gamma,\,\CC)\,\cong\, H^\pt (B\Gamma,\,\CC)\,\longrightarrow\, H^\pt(\Xcal, \,\CC)\,
\longrightarrow\, H^0(S, \,R^nf_*\CC)$$ is independent of $x$. Each fiber $X_s$ carries a flat principal $G$-bundle. So if we choose a lift of $x$ to $\Pcal$, then the monodromy representations define
a morphism $\rho\,: \,S\,\longrightarrow\, \HHom (\Gamma, G)$. But if we pass to the stack, then the resulting morphism
$\underline\rho\,:\, S\,\longrightarrow\, \underline{\chi}_G(\Gamma)$ no longer depends on $x$ and its lift to $\Pcal$. The last sentence of Theorem \ref{thm:main} is meant to say that $\Psi_{(f, \Pcal, \nabla)}$ is equal to the composite 
\[
\CC[\gfrak]^G\,\xrightarrow{\,\underline\Psi_{\Gamma, G}\,}\, \bigoplus_{n\ge 0}H^0(S,\,\Omega^n_{\underline{\chi}_G(\Gamma), cl})\otimes H^n(\Gamma,\,\CC)
\,\xrightarrow{\,\underline\rho^*\otimes \kappa^*\,} \,\bigoplus_{n\ge 0}H^0(S,\,\Omega^n_{S,cl}\otimes R^nf_*\CC).
\]
This makes it apparent that the
cohomology of the fibers only involves classes coming from their fundamental group (as is to be expected).

Goldman is concerned with $G$-invariant quadratic forms $B$ on $\gfrak$ (so $n\,=\,2$) and takes for $\Gamma$ the fundamental group of a closed orientable genus $g$ 
surface. There is no group cohomology in his formulation, as implicit in his discussion is an identification $H^2(\Gamma)\cong \ZZ$. It should however not be left 
out if one demands (as he does) $\Out(\Gamma)$-invariance. Indeed, an orientation reversing automorphism of the closed surface induces minus the identity in 
$H^2(\Gamma)$. Our proof, when restricted to this special case seems different from his. We should however add that Goldman also shows that if $B$ is nondegenerate, 
then so is the associated closed $2$-form on $\underline{\chi}_G(\Gamma)$, so that the latter becomes what one might call a \emph{holomorphically symplectic stack} 
(he works in the differentiable category and avoids stacks, however this difference is merely formal).

We note here that Guruprasad-Huebschmann-Jeffrey-Weinstein \cite{GHJW} obtained an extension of Goldman's theorem 
to a situation where the fibers of $f$ are compact (oriented) surfaces with boundary (but where the $G$-invariant is still 
a quadratic form on its Lie algebra) and where one might want to fix the holonomy on each boundary component. Here 
the universal situation not only involves the fundamental group of the surface, but also the fundamental groups of 
the boundary components and the way these groups are related in a groupoid fashion. In our set-up, this means that instead
of requiring $f$ to be proper, we only demand that it be topologically locally trivial. It is likely that our 
approach lends itself to an extension which incorporates that case (we note that the formula (\ref{eqn:ATrel}) only requires $f$
to be smooth) but we have not pursued this.

There is presumably also a link with work of 
Bloch and Esnault. Given a family of connections with certain properties, there are secondary cohomology
invariants associated with it which are known as Cheeger--Chern--Simons classes \cite{CherS}, \cite{CheeS}. In \cite{BE}, refinements
of these are defined in an algebro-geometric set-up (see
\cite[Theorem 3.7]{BE} and \cite[Definition 3.10]{BE}).
Their constructions use the Atiyah class (which is also the key ingredient here), but their invariants take values in a different complex of forms: their focus is on vector bundles rather than on principal bundles (in other words, $G$ is then a general linear group) and their invariants live on the associated projectivized bundle. It is at present not clear to us whether
in this case the closed (group cohomology valued) forms that we find can be extracted from theirs.
\\

Goldman's theorem was tailored to be applied to the moduli space of projective structures on a fixed closed connected orientable
surface $\Sigma$ of genus $g\,\ge \,2$ (these are projective structures given up to a diffeomorphism of $\Sigma$ isotopic to the identity map) and indeed, this is what he did in the same paper. He thus obtained on that moduli space a natural holomorphic symplectic
form. A projective structure on $\Sigma$ determines a complex-analytic structure and hence this moduli space maps to the Teichm\"uller 
space of $\Sigma$ in a manner that is equivariant for the action of the mapping class group of $\Sigma$. It is however not clear a 
priori that this can be done is a purely algebraic setting, where we work with schemes (or Deligne-Mumford stacks) of finite type over 
an algebraically closed field of characteristic zero. At the outset one needs an algebraic definition of a projective structure. 
Indeed, to do this as well as to show that it leads to symplectic form was our original motivation. In \cite{BL}
we show that our set-up allows us to accomplish this in a straightforward manner.
\\

We happily dedicate this paper to Ezra Getzler on the occasion of his 60th birthday. 

\section{Relative versions of the Chern-Weil homomorphism}\label{sect:charforms}

\subsection{Chern-Weil homomorphism for a relative connection}

We fix a complex Lie group $G$. Let $f\,:\,\Xcal\,\longrightarrow\, S$ be a
holomorphic submersion of complex 
manifolds, and let $\pi\,:\, \Pcal\,\longrightarrow\, \Xcal$ be a holomorphic principal $G$-bundle. We assume that 
$\Pcal$ is endowed with a \emph{holomorphic connection $\nabla$ relative to $f$}. Let us first spell out what this 
exactly means. First take the direct image under $\pi$ of the short exact sequence $$0\,\longrightarrow\, 
\theta_{\Pcal/\Xcal}\,\longrightarrow\, \theta_\Pcal\,\longrightarrow\, \pi^*\theta_\Xcal\,\longrightarrow\, 0$$ of the
sheaf of holomorphic vector fields; it is still exact and that remains so if we subsequently pass to $G$-invariants. So 
letting $\ad(\Pcal)$ stand for the $\Ocal_\Xcal$-module of Lie algebras $\pi_*\theta_{\Pcal/\Xcal}^G$, we have a 
short exact sequence on $\Xcal$
\begin{equation}\label{eqn:adsequence}
0\,\longrightarrow\, \ad(\Pcal)\,\longrightarrow\, \pi_*(\theta_\Pcal)^G\,\longrightarrow\, \theta_\Xcal \,\longrightarrow\, 0.
\end{equation}
A \emph{holomorphic connection $\nabla$ on $\Pcal$ relative to $f$} is by definition a holomorphic splitting of this sequence over the subsheaf 
$\theta_{\Xcal/S}\,\subset\, \theta_\Xcal$. One might picture the triple 
$(\Xcal/S,\,\Pcal,\, \nabla)$ as a holomorphic family of compact complex manifolds, each of which is endowed with a holomorphic principal
$G$-bundle together with a holomorphic
connection on it, that is parametrized by $S$. Given these data, consider the commutative diagram of short exact sequences, one of them 
coming with a section
\begin{equation}\label{c1}
\begin{tikzcd}
 & & 0\arrow[d] &0\arrow[d] &\\
0\arrow{r}& \ad(\Pcal)\arrow[r]\arrow[d, equal] & \pi_*\theta_{\Pcal/S}\arrow{r}\arrow[d] & \theta_{\Xcal/S}\arrow{r}
\arrow[d]\arrow[bend right]{l}[description]{\nabla}&0\\
0\arrow{r}& \ad(\Pcal)\arrow[r] & \pi_*\theta_{\Pcal}\arrow{r}\arrow[d] & \theta_\Xcal\arrow{d}\arrow{r} &0\\
 & & f^*\theta_S\arrow[r, equal]\arrow[d] & f^*\theta_S \arrow[d] & \\
 & & 0 &0 &\\
\end{tikzcd}
\end{equation}
The relative holomorphic connection $\nabla$ lifts locally on $\Xcal$ to an absolute holomorphic connection
(i.e., lifts to local holomorphic homomorphisms from $\theta_\Xcal$ to $\pi_*\theta_{\Pcal}$). The obstruction against being able to do this
globally defines the Atiyah class 
\begin{equation}\label{u1}
\at_{\Xcal}(\Pcal)\,\in\, H^1(\Xcal, \,\Hom(\theta_\Xcal,\,\ad(\Pcal))\,\cong\, H^1(\Xcal,\, \Omega_\Xcal\otimes\ad(\Pcal)).
\end{equation}
It is by definition the extension class 
defined by the second row in \eqref{c1}. If we work relative to $S$, then this evidently produces an element
$\at_{\Xcal/S}(\Pcal)\,\in\, R^1f_*(\Omega_\Xcal\otimes\ad(\Pcal))$. But then the given relative connection $\nabla$ lifts this to 
\begin{equation}\label{at1}
\at^\nabla_{\Xcal/S}(\Pcal)\,\in\, H^0(S,\, R^1f_*(f^{-1}\Omega_S\otimes_{f^{-1}\Ocal_S}\ad(\Pcal)))
\,\cong\, H^0(S,\,\Omega_S\otimes R^1f_*\ad(\Pcal)).
\end{equation}
Indeed, the cokernel of $\theta_{\Xcal/S}\,\xrightarrow{\,\nabla\,}\, \pi_*\theta_{\Pcal/S}\,\subset\, \pi_*\theta_{\Pcal}$ is an 
extension of $f^*\theta_S$ by $\ad(\Pcal)$ and $\at^\nabla_{\Xcal/S}(\Pcal)$
in \eqref{at1} is its extension class. This \emph{relative Atiyah class} 
in \eqref{at1} can be explicated as follows. Write $\Ecal^{p,q}_\Xcal$ for the (flabby) sheaf of $C^\infty$-forms on $\Xcal$ of type $(p,\,q)$. The 
relative connection $\nabla$ lifts locally on $\Xcal$ to an absolute one and a partition of unity then allows us do this globally with 
$C^\infty$ coefficients so that we obtain a $C^\infty$ connection
\[
\widetilde\nabla \,:\, \theta_\Xcal\, \longrightarrow\,\pi_*\theta_{\Pcal}.
\]
Its curvature form $R(\widetilde\nabla)$ will be a global section of 
\[
(\Ecal^{2,0}_\Xcal\oplus \Ecal^{1,1}_\Xcal) \otimes \ad(\Pcal)\,=\,
(\Ecal^{1,0}_\Xcal \oplus \Ecal^{0,1}_\Xcal)\wedge\Omega^1_\Xcal \otimes \ad(\Pcal).
\]
Its restriction to every fiber of $f$ is holomorphic and hence the component of type $(1,\,1)$ must be
a section of the subsheaf $\Ecal^{0,1}_\Xcal\wedge f^*\Omega^1_S \otimes \ad(\Pcal)\,\subset\,
\Ecal^{0,1}_\Xcal\wedge\Omega^1_\Xcal \otimes \ad(\Pcal)$.
The Bianchi identity says that $R(\widetilde\nabla)$ is $\widetilde\nabla$-closed and this implies (see for
example \cite{MS}) that the above component of type $(1,\,1)$ must be
$\overline\partial$-closed. It thus defines a section of 
$$R^1f_* (f^*\Omega^1_S \otimes \ad(\Pcal))\,\cong\, \Omega_S\otimes R^1f_*(\ad(\Pcal))$$
and this is indeed our $\at^\nabla_{\Xcal/S}(\Pcal)$.

\subsubsection*{Associated Chern-Weil homomorphism}

Denote by $\gfrak$ the Lie algebra of $G$, and let $$\Phi\,\in\, \CC[\gfrak]^G$$ be a 
$G$-invariant homogeneous polynomial of degree $n$. We can evaluate $\Phi$ on $\at_{\Xcal}(\Pcal)$ in 
\eqref{u1} and with the help of the cup product obtain an element of 
$H^n(\Xcal,\,\Omega^n_\Xcal)$. When $\Xcal$ is compact K\"ahler, we have 
$H^n(\Xcal,\,\Omega^n_\Xcal)\,\cong\, H^{n,n}(\Xcal)$ and according to Atiyah \cite{At} 
this reproduces the cohomology class given by the image of $\Phi$ under the Chern-Weil homomorphism associated to 
$\Pcal$.

The given relative connection $\nabla$ allows us to construct the following
refinement of the above construction: The evaluation of 
$\Phi$ on $\at^\nabla_{\Xcal/S}(\Pcal)$ yields $$\Phi(\at^\nabla_{\Xcal/S}(\Pcal))\,\in\, H^0(\Omega^n_S\otimes 
R^nf_*\Ocal_\Xcal).$$ We thus obtain a homomorphism of graded algebras
\begin{equation}\label{eqn:ATrel}
\CC[\gfrak]^G\,\longrightarrow\, \bigoplus_{n \geq 0} H^0(S,\, \Omega^n_S\otimes R^nf_*\Ocal_{\Xcal}).
\end{equation}

If we assume that $f$ is proper, then by the Grauert direct image theorem \cite{Gr}, each $R^nf_*\Ocal_\Xcal$ is a coherent $\Ocal_S$-module. The 
relative Chern-Weil homomorphism then commutes with base change: if $g\,:\,S'\,\longrightarrow\, S$ is a holomorphic map of complex 
manifolds and $$\widetilde{g}\,:\,\Xcal'\,:=\,\Xcal\times_SS'\,\longrightarrow\,\Xcal$$ is the pulled back
family, so that we have the pull-back
$(\widetilde{g}^*\Pcal,\, \widetilde{g}^*\nabla)$ of 
$(\Pcal, \,\nabla)$ over $\Xcal'/S'$, then $$\Phi(\at_{\Xcal'/S'}^{\widetilde{g}^*\nabla})\,=\,g^*\Phi(\at_{g^*\Xcal/S'}^\nabla).$$

If $\Xcal$ is even compact and admits a K\"ahler metric, then by a classical result of Deligne \cite{deligne:ss} the Leray spectral 
sequence for $f$ applied to the constant sheaf $\underline{\CC}_\Xcal$ degenerates. If we combine this with the Hodge decomposition theorem we 
find that $H^0(S,\Omega^n_S\otimes R^nf_*\Ocal_\Xcal)$ naturally sits inside the $n$-th Leray subquotient of $H^{2n}(\Xcal;\, \underline{\CC})$ and that 
this is also the image of the Chern-Weil class of $\Pcal$ in $H^{2n}(\Xcal;\, \underline{\CC})$ defined by $\Phi$.

\subsection{Chern-Weil homomorphism for a flat relative connection}\label{subsect:relflat}

\emph{We here assume that $f$ is proper and that $\nabla$ is \emph{flat} as a relative holomorphic connection.} By 
the last property we mean that the associated section of $\pi_*\theta_{\Pcal/S}\,\longrightarrow\, \theta_{\Xcal/S}$ 
preserves the Lie bracket of vector fields. This is equivalent to the curvature form $R(\nabla)$ (which is here a section of 
$\Omega^2_{\Xcal/S}\otimes \ad(\Pcal)$) being zero. The homomorphism $\ad(\Pcal)\,\longrightarrow\, 
\Omega_{\Xcal/S}\otimes\ad(\Pcal)$ given by $\nabla$ will, for notational convenience, again be denoted by $\nabla$. 
The kernel of the map $\nabla\,:\, \ad(\Pcal)\,\longrightarrow\, \Omega_{\Xcal/S}\otimes\ad(\Pcal)$ is then a 
subsheaf $$\ad(\Pcal)^\nabla\,\subset\, \ad(\Pcal)$$ of locally free $f^{-1}\Ocal_S$-modules with the property that 
$$\Ocal_\Xcal\otimes_{f^{-1}\Ocal_S} \ad(\Pcal)^\nabla\,\longrightarrow\,\ad(\Pcal)$$ is an isomorphism of 
$\Ocal_\Xcal$-modules. One might call this then a \emph{local system relative to} $f$ and regard 
$(\Xcal/S,\,\Pcal,\, \nabla)$ as a family of compact complex manifolds, each of which is endowed with a holomorphic principal 
$G$-bundle and a holomorphic flat connection on it. The higher direct images $R^nf_*\ad(\Pcal)^\nabla$ can be computed with a 
relative de Rham resolution. Since $f$ is proper, it then follows that these are in fact coherent $\Ocal_S$-modules.

\begin{lemma}\label{lemma:Atrel}
In the above set-up the class $\at^\nabla_{\Xcal/S}(\Pcal)\,\in\, H^0(S,\, \Omega_S\otimes R^1f_*\ad(\Pcal))$ has as a 
natural lift $$\At^\nabla_{\Xcal/S}(\Pcal)\,\in\, H^0(S,\,\Omega_S\otimes R^1f_*\ad(\Pcal)^\nabla).$$ Moreover, if 
$\widetilde\nabla$ is a $C^\infty$-lift of $\nabla$ to an absolute connection on $\ad(\Pcal)$, then 
$\widetilde\nabla$ induces a connection on $f_*\ad(\Pcal)^\nabla$ as well as on the higher direct images, and 
$\At^\nabla_{\Xcal/S}(\Pcal)$ is closed for the resulting connection on $R^1f_*\ad(\Pcal)^\nabla$.
\end{lemma}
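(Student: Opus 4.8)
The plan is to construct the lift $\At^\nabla_{\Xcal/S}(\Pcal)$ by re-examining the $C^\infty$ description of the relative Atiyah class given just above the lemma, and to show that the flatness of $\nabla$ forces the relevant curvature component to take values in the flat subsheaf $\ad(\Pcal)^\nabla$. Recall from the discussion preceding the lemma that a global $C^\infty$ absolute connection $\widetilde\nabla$ on $\Pcal$ lifting $\nabla$ has curvature $R(\widetilde\nabla)$, and that $\at^\nabla_{\Xcal/S}(\Pcal)$ is represented by the type $(1,1)$-component of $R(\widetilde\nabla)$, viewed as an $\overline\partial$-closed section of $\Ecal^{0,1}_\Xcal\wedge f^*\Omega^1_S\otimes\ad(\Pcal)$. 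The essential new input is that $\nabla$ is flat along the fibers, so the component of $R(\widetilde\nabla)$ in the purely fiber direction (the section of $\Omega^2_{\Xcal/S}\otimes\ad(\Pcal)$) vanishes identically.

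First I would set up the relative de Rham complex $\ad(\Pcal)\xrightarrow{\nabla}\Omega_{\Xcal/S}\otimes\ad(\Pcal)\xrightarrow{\nabla}\Omega^2_{\Xcal/S}\otimes\ad(\Pcal)\to\cdots$, which resolves $\ad(\Pcal)^\nabla$ (as an $f^{-1}\Ocal_S$-module) up to the holomorphic Poincar\'e lemma along the fibers; this is exactly the resolution used to compute $R^nf_*\ad(\Pcal)^\nabla$. The class $\At^\nabla_{\Xcal/S}(\Pcal)$ should then be obtained by a $\overline\partial$-and-$\nabla$ double-complex argument: I would interpret the $(1,1)$-component of $R(\widetilde\nabla)$ not merely as $\overline\partial$-closed with values in $\ad(\Pcal)$, but show that flatness lets us regard its cohomology class as living in $R^1f_*$ of the flat subsheaf. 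Concretely, I expect that the $\nabla$-covariant derivative of this curvature component, which is governed by the Bianchi identity $\widetilde\nabla R(\widetilde\nabla)=0$ together with the vanishing of the relative curvature, forces the class to factor through $\ad(\Pcal)^\nabla\hookrightarrow\ad(\Pcal)$. The compatibility of the two direct-image descriptions, using the isomorphism $\Ocal_\Xcal\otimes_{f^{-1}\Ocal_S}\ad(\Pcal)^\nabla\xrightarrow{\sim}\ad(\Pcal)$ stated before the lemma, then identifies the lift with the image of $\at^\nabla_{\Xcal/S}(\Pcal)$ under the natural map $R^1f_*\ad(\Pcal)^\nabla\to R^1f_*\ad(\Pcal)$.

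For the second assertion, I would observe that any $C^\infty$-lift $\widetilde\nabla$ of $\nabla$ to an absolute connection acts on $\ad(\Pcal)$ and, by passing to $\overline\partial$-cohomology along the fibers, descends to a connection on each $R^nf_*\ad(\Pcal)^\nabla$ over $S$; this is the relative analogue of the Gauss-Manin connection, and its well-definedness follows because two such lifts differ by a section of $f^*\Omega^1_S\otimes\ad(\Pcal)$, which acts trivially after taking fiberwise de Rham cohomology. That $\At^\nabla_{\Xcal/S}(\Pcal)$ is horizontal for this connection is then precisely a reformulation of the Bianchi identity: the Bianchi identity $\widetilde\nabla R(\widetilde\nabla)=0$, restricted to the appropriate bidegree and projected onto the relevant direct image, says exactly that the covariant derivative of the representing class vanishes.

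The main obstacle I anticipate is bookkeeping in the bigraded (Dolbeault) $\otimes$ (connection) double complex: one must track the several components of $R(\widetilde\nabla)$ in $(\Ecal^{2,0}\oplus\Ecal^{1,1})\otimes\ad(\Pcal)$, decompose the absolute Bianchi identity along the fiber/base directions of $\theta_\Xcal=\theta_{\Xcal/S}\oplus f^*\theta_S$, and verify that the flatness of the \emph{relative} connection is exactly what kills the obstruction to landing in $\ad(\Pcal)^\nabla$. The subtlety is that flatness is a statement about the relative curvature only, so I must be careful that the mixed $(1,1)$ term, which involves one fiber and one base covector, is controlled not by relative flatness directly but by the Bianchi identity feeding the vanishing of the relative curvature into the $\overline\partial$-cohomology class. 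Once the double-complex formalism is in place, both assertions should follow by unwinding which differential in the double complex corresponds to $\overline\partial$, which to $\nabla$, and which to the induced connection on the direct images.
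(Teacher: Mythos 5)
Your plan reproduces the paper's argument in all essentials: relative flatness kills the purely fiberwise part of $R(\widetilde\nabla)$, so the curvature is a section of $\Ecal^1_\Xcal\wedge f^*\Omega_S\otimes\ad(\Pcal)$, which via the isomorphism $\Ocal_\Xcal\otimes_{f^{-1}\Ocal_S}\ad(\Pcal)^\nabla\cong\ad(\Pcal)$ can be rewritten with coefficients in $f^{-1}\Omega_S\otimes_{f^{-1}\Ocal_S}\ad(\Pcal)^\nabla$; the Bianchi identity then makes its image $d_{\Xcal/S}$-closed in the relative de Rham complex of that local system, which is exactly what produces the lift $\At^\nabla_{\Xcal/S}(\Pcal)$ in $\Omega_S\otimes R^1f_*\ad(\Pcal)^\nabla$, and Bianchi again gives horizontality. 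The one step you should repair is the justification of well-definedness of the induced connection on $R^\pt f_*\ad(\Pcal)^\nabla$: you address the ambiguity in the choice of the absolute lift $\widetilde\nabla$ of $\nabla$, asserting that the difference, a section of $f^*\Omega^1_S\otimes\ad(\Pcal)$, ``acts trivially after taking fiberwise de Rham cohomology.'' That is not the relevant ambiguity (the lemma fixes $\widetilde\nabla$ and makes no claim of independence from it), and the assertion itself is doubtful, since the difference acts through the adjoint action of a generally non-flat section of $\ad(\Pcal)$. The ambiguity that actually has to be resolved is in the choice of lift to $\Xcal$ of a vector field on $S$: two such lifts differ by a vertical vector field, and covariant derivation along a vertical field is, by Cartan's homotopy formula in the relative de Rham complex of $\ad(\Pcal)^\nabla$, chain-homotopic to zero and hence acts trivially on the direct images. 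With that substitution your proof coincides with the paper's.
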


\begin{proof}
If we go through the preceding procedure, we note that $R(\widetilde\nabla)$ has now the additional property that
its restriction to every fiber of $f$ is zero. This means that $R(\widetilde\nabla)$ is a section of
\begin{equation}\label{at2}
\Ecal^1_\Xcal\wedge f^*\Omega_S\otimes \ad(\Pcal)\,\,=\,\, \Ecal^1_\Xcal\wedge_{f^{-1}\Ocal_S} \big(f^{-1}\Omega_S\otimes_{f^{-1}\Ocal_S} \ad(\Pcal)^\nabla\big),
\end{equation}
where $\Ecal^1_\Xcal\,=\,\Ecal^{1,0}_\Xcal \oplus \Ecal^{0,1}_\Xcal$. The Bianchi identity says that $R(\widetilde\nabla)$ is 
$\widetilde\nabla$-closed. Since on the right hand side of \eqref{at2} we have already passed to the relative local system $\ad(\Pcal)^\nabla$, this 
implies that its image in $\Ecal^1_{\Xcal/S}\otimes_{f^{-1}\Ocal_S} \big(f^{-1}\Omega_S\otimes_{f^{-1}\Ocal_S} \ad(\Pcal)^\nabla\big)$ is 
$d_{\Xcal/S}$-closed. Hence we now obtain a section $\At^\nabla_{\Xcal/S}(\Pcal)$ of $R^1f_*(f^{-1}\Omega_S\otimes_{f^{-1}\Ocal_S} 
\ad(\Pcal)^\nabla)=\Omega_S\otimes R^1f_*\ad(\Pcal)^\nabla$.

The direct image $R^1f_*\ad(\Pcal)^\nabla$ inherits from $\widetilde\nabla$ a connection: the covariant derivative 
with respect to a local $C^\infty$ vector field on $S$ is computed by choosing a lift of that vector field to $\Xcal$ and then
taking the covariant derivative with respect to it for the connection $\widetilde\nabla$. The induced action on the higher direct 
images of $\ad(\Pcal)$ is independent of the lift of the vector field because two such lifts differ by a vector field along the fibers 
of $f$ and covariant derivation with respect to such a vector field acts trivially on $R^\pt f_*\ad(\Pcal)^\nabla$. 
Since $\At^\nabla_{\Xcal/S}(\Pcal)$ represents the curvature form, the Bianchi identity implies that 
$\At^\nabla_{\Xcal/S}(\Pcal)$ is closed relative to the above connection on $R^1 f_*\ad(\Pcal)^\nabla$.
\end{proof}

We can understand $\At^\nabla_{\Xcal/S}(\Pcal)$ as a \emph{Kodaira-Spencer class}: if we regard $(\Xcal/S,\,\Pcal,\, 
\nabla)$ as a family of holomorphic principal $G$-bundles with flat connections, then it is the first obstruction against this 
family being locally trivial over $S$. In this context we often write $\At^\nabla_{\Xcal/S}(\Pcal)$ as a homomorphism
\begin{equation}\label{eqn:ks}
\At^\nabla_{\Xcal/S}(\Pcal)\,\,:\,\, \theta_S\,\longrightarrow\, R^1f_*\ad(\Pcal)^\nabla.
\end{equation}
In Subsection \ref{subsect:comparison} we shall define a morphism from $S$ to the so-called
character stack and then interpret this homomorphism $\At^\nabla_{\Xcal/S}(\Pcal)$ as the derivative of this morphism.

For $\Phi\,\in\, \CC[\gfrak]^G$ homogeneous of degree $n$, its evaluation on $\At^\nabla_{\Xcal/S}(\Pcal)$ now yields
\[
\Phi(\At_{\Xcal/S}^\nabla(\Pcal))\,\in\, H^0(S,\,\Omega_S^n\otimes R^nf_*f^{-1}\Ocal_S)\,
\cong \,H^0(S,\,\Omega_S^n\otimes R^n f_*\underline{\CC}_\Xcal).
\]
So this is a twisted holomorphic $n$-form on $S$, to be precise, it is
a holomorphic $n$-form with values in the local system $R^n f_*\underline{\CC}_\Xcal$. 
We thus obtain a homomorphism of graded algebras $\CC[\gfrak]^G\,\longrightarrow\,
\bigoplus_{n\ge 0} H^0(S,\,\Omega_S^n\otimes R^n f_*\underline{\CC}_\Xcal)$,
where the cup product furnishes the product structure on the right hand side.

\begin{corollary}\label{cor2:}
In this situation, the relative Chern-Weil homomorphism takes its values in the closed forms: it is a graded algebra homomorphism
\[
\CC[\gfrak]^G\,\,\longrightarrow\,\,\bigoplus_{n\ge 0} H^0(S,\,\Omega_{S,cl}^n\otimes R^n
f_*\underline{\CC}_\Xcal)
\]
whose definition commutes with base change. In particular, if $n$ is the real fiber dimension of $f$ (so that
$n$ is even) and $\Phi\,\in\, \CC[\gfrak]^G$ is homogeneous of degree $n$,
then the holomorphic $n$-form $f_! \Phi(\at_{\Xcal/S}^\nabla)$ on $S$ obtained by integration along fibers is closed,
where $\at_{\Xcal/S}^\nabla$ is the section constructed in \eqref{at1}.
\end{corollary}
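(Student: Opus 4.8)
The graded-algebra and multiplicativity statements were already obtained in the discussion preceding the corollary, so the genuinely new content is that each form $\Phi(\At^\nabla_{\Xcal/S}(\Pcal))$ is closed, that the construction commutes with base change, and the final assertion about fibre integration. I would prove closedness by running the Chern--Weil argument one level up, on the higher direct images over $S$, with Lemma~\ref{lemma:Atrel} playing the role of the Bianchi identity and the $G$-invariance of $\Phi$ playing the role that makes an invariant polynomial of a curvature closed.

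Fix $\Phi$ homogeneous of degree $n$ and polarise it to a symmetric $G$-invariant $n$-linear form. Because $\Phi$ is $G$-invariant it descends to a morphism of relative local systems $\Phi\colon \sym^n\ad(\Pcal)^\nabla\to\underline{\CC}_\Xcal$, and applying $R^nf_*$ together with the cup product I can write $\Phi(\At^\nabla_{\Xcal/S}(\Pcal))=\Phi_*\big((\At^\nabla_{\Xcal/S}(\Pcal))^{\cup n}\big)$, a holomorphic $n$-form on $S$ with values in $R^nf_*\underline{\CC}_\Xcal$; here the cup power is formed from the cup product on the direct images and the symmetric multiplication on the coefficient sheaves $\sym^\bullet\ad(\Pcal)^\nabla$. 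The three facts I need are: (i) the $C^\infty$ lift $\widetilde\nabla$ comes from a connection on $\Pcal$, hence acts on $\ad(\Pcal)$ by Lie-algebra automorphisms and extends to a derivation of $\sym^\bullet\ad(\Pcal)$, so the induced connection $D$ on $\bigoplus_p R^pf_*\sym^\bullet\ad(\Pcal)^\nabla$ is a derivation for $\cup$; (ii) $\Phi$ is $\widetilde\nabla$-parallel, so $\Phi_*$ intertwines $D$ with the connection that the flat differential on $\underline{\CC}_\Xcal$ induces on $R^nf_*\underline{\CC}_\Xcal$, and the latter is exactly the holomorphic Gauss--Manin connection $\nabla^{\mathrm{GM}}$; (iii) by Lemma~\ref{lemma:Atrel}, $\At^\nabla_{\Xcal/S}(\Pcal)$ is $D$-closed, i.e. $d^D\At^\nabla=0$.

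Granting these, the verification is the usual two-line Chern--Weil calculation: using that $\Phi_*$ is flat and $D$ is a derivation, together with the graded symmetry of $\cup$ and the symmetry of $\Phi$, one finds $\nabla^{\mathrm{GM}}\Phi(\At^\nabla_{\Xcal/S}(\Pcal))=\Phi_*\,d^D\big((\At^\nabla)^{\cup n}\big)=n\,\Phi_*\big((d^D\At^\nabla)\cup(\At^\nabla)^{\cup(n-1)}\big)=0$ by (iii). Since $\Phi(\At^\nabla)$ is a holomorphic form valued in the local system $R^nf_*\underline{\CC}_\Xcal$, this vanishing is precisely the statement that it is a section of $\Omega^n_{S,cl}\otimes R^nf_*\underline{\CC}_\Xcal$. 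Base change is inherited from the corresponding property of the non-flat relative Chern--Weil homomorphism \eqref{eqn:ATrel} recorded earlier, because the relative local system $\ad(\Pcal)^\nabla$, the class $\At^\nabla$ and the $C^\infty$ lift $\widetilde\nabla$ all pull back compatibly. For the last assertion, when $n$ equals the real fibre dimension the fibrewise fundamental classes assemble into a parallel section of the dual of $R^nf_*\underline{\CC}_\Xcal$, so fibre integration $f_!\colon R^nf_*\underline{\CC}_\Xcal\to\underline{\CC}_S$ is a morphism of local systems carrying $\nabla^{\mathrm{GM}}$ to $d$; applying it to the closed form $\Phi(\At^\nabla)$ yields an honest closed holomorphic $n$-form on $S$ (this is the form written $f_!\Phi(\at^\nabla_{\Xcal/S})$ in the statement).

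The one genuinely substantive point, and where I expect the work to lie, is ingredient (ii): making precise that the $C^\infty$ connection $D$ induced by $\widetilde\nabla$ agrees, after pushing along the parallel map $\Phi_*$, with the holomorphic Gauss--Manin connection on $R^nf_*\underline{\CC}_\Xcal$, and that the multiplicative Chern--Weil formalism (the derivation property of $D$ for cup products of higher direct images) is set up correctly. This rests on the $G$-invariance of $\Phi$ and on the explicit description of $\At^\nabla$ via \eqref{at2}; once it is in place Lemma~\ref{lemma:Atrel} does the rest, and the graded sign bookkeeping in the Leibniz step is routine.
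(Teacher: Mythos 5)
Your proposal is correct and follows essentially the same route as the paper: the paper's own proof simply invokes Lemma~\ref{lemma:Atrel} together with ``a standard argument in Chern--Weil theory'' and notes that base change is evident, and your write-up is precisely an unpacking of that standard argument (polarisation of $\Phi$, the derivation property of the induced connection on the higher direct images, $D$-closedness of $\At^\nabla_{\Xcal/S}(\Pcal)$ from the lemma, and the Leibniz computation). The additional details you supply, including the identification with the Gauss--Manin connection and the treatment of fibre integration via the parallel fundamental class, are consistent with what the paper leaves implicit.
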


\begin{proof}
The first assertion follows from Lemma \ref{lemma:Atrel} and a standard argument in the
Chern-Weil theory. The base change property is evident.
\end{proof}

\section{The universal construction}

\subsection{The character stack}\label{subsect:charstack}

We first recall the notion of a characteristic space (for which we use Sikora \cite{Si} as a general 
reference). Let $\Gamma$ be a finitely generated group, admitting $p$ generators say, so that there exists an 
epimorphism from the free group $F_p$ on $p$ generators to $\Gamma$. Then the group homomorphisms from $\Gamma$ to $G$ make up a complex space $\HHom(\Gamma,\, G)$ of finite type over $\CC$. We note that it can be realized in an 
evident manner as a closed subspace of $G^p$: every element in the kernel of the projection $F_p\,\longrightarrow\, 
\Gamma$ defines a morphism $G^p\,\longrightarrow\,G$ for which the preimage of $1\,\in\, G$ defines a closed 
subspace of $G^p$ and $\HHom(\Gamma,\, G)$ is then simply the intersection of these subspaces. It is independent 
of the epimorphism, but it need not be reduced. Its definition is covariant in $G$ and contravariant in $\Gamma$. In particular,
$\Aut(\Gamma)\times \Aut(G)$ acts on $\HHom(\Gamma,\, G)$.
Let us note in passing that the noetherian nature of complex-analytic geometry implies that the
ideal defining $\HHom(\Gamma,\, G)$ in $G^p$ is locally finitely generated and hence will involve only finitely many 
relations. So for local considerations there is no loss in generality in assuming that $\Gamma$ is finitely presented.

Composition with inner automorphisms of $G$ makes $G$ act on $\HHom(\Gamma,\, G)$ in the category of complex spaces. The kernel of this 
action clearly contains the center $Z(G)$ of $G$ and so the action of $G$ is via $\overline{G}\,:=\, G/Z(G)$. We could form the 
categorical quotient $\overline{G}\backslash\HHom(\Gamma,\, G)$ (or in case $G$ is a linear algebraic group, $\spec \CC[\HHom(\Gamma,\, 
G)]^{\overline G}$), but we prefer to treat the quotient as a stack, denoting it by $\underline{\chi}_G(\Gamma)$. We shall not recall 
the formal definition as for our purpose it suffices to know what it means to have a stack morphism from a complex space $S$ to 
$\underline{\chi}_G(\Gamma)$ (often called an $S$-valued point of $\underline{\chi}_G(\Gamma)$). In the algebraic setting (so where $S$ 
is a complex scheme of finite type), it simply assigns to each closed point of $S$ a $G$-orbit in $\HHom(\Gamma,\, G)$ in such a manner 
that this is locally (in some Grothendieck topology that has to be specified) presented as a scheme morphism from $S$ to 
$\HHom(\Gamma,\, G)$. The same description continues to be valid in the context of other categories of ringed spaces such as the 
holomorphic and $C^\infty$ categories (where $G$ is a complex Lie group respectively\ just a Lie group). An advantage of this approach is that it allows us to avoid 
imposing irrelevant additional assumptions.

It is then clear that if $f\,:\,\Xcal\,\longrightarrow\, S$ is in one of these categories, 
and $f$ is topologically locally trivial with connected fiber and simply connected base $S$, then a principal $G$-bundle 
$\Pcal$ over $\Xcal$ endowed with a connection $\nabla$ relative to $f$ determines a morphism in this category from 
$S$ to $\underline{\chi}_G(\Gamma)$, where $\Gamma$ is the fundamental group of a fiber (so given up to an inner 
automorphism). Conversely, such a morphism determines the pair $(\Pcal,\, \nabla)$ uniquely up to an obvious notion 
of isomorphism.

A \emph{regular form} on $\underline{\chi}_G(\Gamma)$ is by definition a regular form $\omega$ on the complex space 
$\HHom(\Gamma,\, G)$ that is \emph{basic} with respect to the $G$-action; this means that
$$
i_{\mathbf v} \omega\,=\, 0 \quad \text{ and }\quad i_{\mathbf v}d\omega\,=\, 0 
$$
for all $v\, \in\, \gfrak$, where ${\mathbf v}$ is the vector field on $\HHom(\Gamma,\, G)$ corresponding to
$v$ for the action of $G$ on $\HHom(\Gamma,\, G)$. If $\omega$ is closed, then the second property
is obviously automatic and hence such a form $\omega$ is basic if and only if its contraction with $\gfrak$ is basic.

Let us also note that the obvious 
action of $\Aut(\Gamma)$ on $\HHom(\Gamma,\, G)$ gives rise to an action of $\Aut(\Gamma)$ on the space of regular forms on 
$\underline{\chi}_G(\Gamma)$ and that (as one would expect) this action factors through the outer automorphism group 
$\Out(\Gamma)$.

We have a similar way of describing the Zariski tangent space of $\underline{\chi}_G(\Gamma)$ at a point, represented 
by $\rho\,\in\, \HHom(\Gamma,\, G)$, say. Recall that the space of $1$-cycles $Z^1(\Gamma,\, \Ad\rho)$ consists of 
maps $\Gamma\,\longrightarrow\, \gfrak$ that are completely given by their values on a set of generators. For 
$Z^1(F_p,\, \Ad\rho)$ these values can be arbitrary so that $Z^1(\Gamma,\, \Ad\rho)$ becomes a subspace of 
$$Z^1(F_p,\, \gfrak)\,\cong\, Z^1(F_p)\otimes\gfrak\,\cong\,\gfrak^{\oplus p}$$ (which is independent of $\rho$). 
When we regard $\rho$ as an element of the algebraic group $G^p$, then the right translation with $\rho$ identifies 
$Z^1(\Gamma,\, \Ad\rho)$ with the Zariski tangent space of $\HHom(\Gamma,\, G)$ at $\rho$ \cite[Theorem\, 35]{Si}. 
To be precise: if $t\,\longmapsto \,\rho_t$ is a curvelet in $\HHom(\Gamma,\, G)$ given up to first order with $\rho_0=\rho$ and 
we write $\rho_t\equiv e^{t\sigma}\rho_0\pmod{t^2}$, then the condition $\rho_t(\gamma_1\gamma_2)\,=\,
\rho_t(\gamma_1)\rho_t(\gamma_2)$ boils down to $\sigma (\gamma_1\gamma_2)\,=\,
\sigma (\gamma_1) + \rho(\gamma_1)\sigma (\gamma_2)\rho(\gamma_1)^{-1}$, which indeed amounts to 
$\sigma\in Z^1(\Gamma,\, \gfrak)$.
Under this map, the space of $1$-boundaries $B^1(\Gamma,\, \Ad\rho)$ is mapped on the tangent space of the $G$-orbit 
of $\rho$ and therefore the group cohomology space $H^1(\Gamma,\,\Ad\rho)$ can be understood as the Zariski tangent 
space of $\underline\chi_G(\Gamma)$ at $\underline\rho$. There is a similar notion for the Zariski tangent space of an 
$S$-valued point (in a manner that is straightforward to spell out); this then appears as a coherent 
$\Ocal_S$-module over $S$ that we shall denote by $\theta_{\underline\chi_G(\Gamma), S}$.

The regular forms on $\underline{\chi}_G(\Gamma)$ of interest here take values in $H^\pt(\Gamma, \,\CC)$ and are 
defined by the $G$-invariants in $\CC[\gfrak]$: any $\Phi\,\in\, \CC[\gfrak]^G$ that is homogeneous of degree $n$ 
combines with the cup product to give a linear map
\[
\eta(\rho,\Phi) :\wedge^n H^1(\Gamma,\, \Ad\rho) \,\,\longrightarrow\,\, H^n(\Gamma,\, (\Ad\rho)^{\otimes n})
\,\,\longrightarrow\,\, H^n(\Gamma,\, \CC).
\] 
We can now establish part of our main Theorem \ref{thm:main}.

\begin{proposition}\label{prop:eta}
The linear maps $\eta(\rho,\Phi)$ define a $H^n(\Gamma,\, \CC)$-valued $n$-form $\eta_\Phi$ on $\HHom(\Gamma,\, G)$ that is basic relative to the $G$-action.
We thus obtain a graded $\CC$-algebra homomorphism
\[
\eta\,\,: \,\,\CC[\gfrak]^G\,\,\longrightarrow\,\, \bigoplus_{n\ge 0} \big(H^0(\underline{\chi}_G(\Gamma),\,\,
\Omega^n_{\underline{\chi}_G(\Gamma)})\otimes H^n(\Gamma,\, {\mathbb C})\big)^{\Aut(\Gamma)},
\]
where the degree of the summand indexed by $n$ is $2n$.
\end{proposition}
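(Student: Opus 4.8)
The plan is to render the pointwise recipe explicit at the level of inhomogeneous group cochains and then to check, in turn, that $\eta_\Phi$ is a genuine regular alternating form, that it is basic for the $G$-action, and that $\Phi\mapsto\eta_\Phi$ is an $\Aut(\Gamma)$-equivariant homomorphism of graded algebras.

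First I would fix $\rho\in\HHom(\Gamma,G)$ and represent tangent vectors to $\HHom(\Gamma,G)$ at $\rho$ by $1$-cocycles $\sigma_1,\dots,\sigma_n\in Z^1(\Gamma,\Ad\rho)$, using the identification recalled above. Taking the cup product of inhomogeneous cochains, I form the $\CC$-valued $n$-cochain
\[
(\gamma_1,\dots,\gamma_n)\,\longmapsto\,\Phi\big(\sigma_1(\gamma_1)\otimes\Ad_{\rho(\gamma_1)}\sigma_2(\gamma_2)\otimes\cdots\otimes\Ad_{\rho(\gamma_1\cdots\gamma_{n-1})}\sigma_n(\gamma_n)\big),
\]
whose cohomology class is $\eta_\Phi(\sigma_1,\dots,\sigma_n)\in H^n(\Gamma,\CC)$. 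Since the cup product of cocycles is a cocycle and its class depends only on the classes of the factors, this expression descends from $Z^1(\Gamma,\Ad\rho)$ to $H^1(\Gamma,\Ad\rho)$; and because the degree-one cup product is graded commutative while $\Phi$, being symmetric, is insensitive to a transposition of two coefficient slots, interchanging two arguments multiplies the value by $-1$. Hence $\eta_\Phi$ is alternating and gives an $H^n(\Gamma,\CC)$-valued $n$-form. For a fixed tuple $(\gamma_1,\dots,\gamma_n)$ the displayed cochain is holomorphic (indeed algebraic) in $\rho$ through $\Ad$ and the group law, so $\eta_\Phi$ is a regular form on $\HHom(\Gamma,G)$.

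Next I would verify basicness. The vector field $\mathbf v$ associated with $v\in\gfrak$ is tangent to the $G$-orbit, so under the identification of tangent vectors with cocycles it is a coboundary, i.e.\ lies in $B^1(\Gamma,\Ad\rho)\subset Z^1(\Gamma,\Ad\rho)$; as $\eta_\Phi$ factors through $H^1(\Gamma,\Ad\rho)$ and coboundaries vanish there, $i_{\mathbf v}\eta_\Phi=0$. For the remaining condition I invoke $G$-invariance: the $\Ad$-invariance of $\Phi$ together with the naturality of the cup product shows that $\eta_\Phi$ is $G$-invariant, whence $\mathcal L_{\mathbf v}\eta_\Phi=0$. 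Cartan's formula $\mathcal L_{\mathbf v}=i_{\mathbf v}d+d\,i_{\mathbf v}$ and $i_{\mathbf v}\eta_\Phi=0$ then give $i_{\mathbf v}d\eta_\Phi=0$, so $\eta_\Phi$ is basic and defines a regular form on $\underline\chi_G(\Gamma)$.

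Finally, $\Aut(\Gamma)$-invariance follows from naturality in $\Gamma$: an automorphism of $\Gamma$ acts compatibly on $\HHom(\Gamma,G)$, on the $1$-cocycles, and on $H^n(\Gamma,\CC)$, and both the cup product and the evaluation of $\Phi$ commute with this action, so $\eta_\Phi$ lands in the $\Aut(\Gamma)$-invariants. Multiplicativity---that $\eta_{\Phi\Phi'}=\eta_\Phi\wedge\eta_{\Phi'}$, where the form parts are combined by $\wedge$ and the $H^\pt(\Gamma,\CC)$-values by cup product---reduces to the associativity and graded commutativity of the cup product, together with the fact that the product of two invariant polynomials corresponds to the symmetrized tensor product of the associated multilinear forms; the shuffle signs produced by wedging forms match those produced by reordering the cup-product factors. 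I expect the main obstacle to be bookkeeping rather than ideas: confirming that the pointwise cohomology classes assemble into a regular (not merely set-theoretic) section of $\Omega^n_{\HHom(\Gamma,G)}\otimes H^n(\Gamma,\CC)$ as $\rho$ varies, and that all the signs in the multiplicativity identity are consistent. The conceptual steps---descent to $H^1$ and basicness via $G$-invariance---are short once the cochain formalism is set up.
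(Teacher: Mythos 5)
Your proposal is correct, and it reaches the same form $\eta_\Phi$ as the paper, but by a genuinely different route. The paper never writes down a group cochain: it builds a single holomorphic $n$-form $\widetilde\eta_\Phi$ on the smooth ambient space $\HHom(F_p,G)\cong G^p$ by left translation of the linear map $\wedge^n(Z^1(F_p)\otimes\gfrak)\to\wedge^n Z^1(F_p)\otimes\sym^n\gfrak\xrightarrow{1\otimes\Phi}\wedge^n Z^1(F_p,\CC)$, restricts it to the closed subspace $\HHom(\Gamma,G)$, and only then composes with the fixed linear map $\wedge^n Z^1(\Gamma,\CC)\to\wedge^n H^1(\Gamma,\CC)\to H^n(\Gamma,\CC)$. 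The payoff of that packaging is exactly the point you flag as your residual worry: since $\widetilde\eta_\Phi$ is a translation-invariant form on a Lie group with values in a fixed finite-dimensional vector space, and all subsequent steps are restriction to a closed analytic subspace followed by a constant linear map, the pointwise classes visibly assemble into a regular section of $\Omega^n_{\HHom(\Gamma,G)}\otimes H^n(\Gamma,\CC)$ -- no bookkeeping over the infinite-dimensional cochain complex is needed. Conversely, your treatment of basicness is more explicit (and arguably more convincing) than the paper's: you isolate $i_{\mathbf v}\eta_\Phi=0$ as coming from the fact that orbit directions are coboundaries, which die once one passes to $H^1(\Gamma,\Ad\rho)$, and then get $i_{\mathbf v}d\eta_\Phi=0$ from $G$-invariance via Cartan's formula; the paper compresses this into the remark that $\widetilde\eta_\Phi$ and its exterior derivative are conjugation-invariant and that the final composition factors through cohomology. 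You also spell out multiplicativity and the alternating property (graded commutativity of the degree-one cup product against the symmetry of $\Phi$), which the paper leaves implicit. The two constructions agree because your cup-product cochain is precisely what the paper's $\widetilde\eta_\Phi$ becomes after evaluating on cocycles and projecting to cohomology. If you keep your cochain-level presentation, the clean way to close the regularity gap is to borrow the paper's observation: everything factors through the restriction to $\HHom(\Gamma,G)$ of a globally defined invariant form on $G^p$ with values in the finite-dimensional space $\wedge^n Z^1(F_p,\CC)$.
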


\begin{proof}
Consider the natural linear map 
\[
\wedge^n(Z^1(F_p)\otimes\gfrak)\,\,\longrightarrow\, \,\wedge^n Z^1(F_p)\otimes \sym^n\gfrak\,\,\xrightarrow{\,1\otimes\Phi\,\,}
\,\, \wedge^n Z^1(F_p,\, \CC).
\]
(It descends to a map $\wedge^n H^1(F_p,\, \Ad\rho)\,\longrightarrow\, H^n(F_p,\, {\mathbb C})$, but this is of little interest 
here, as $H^n(F_p,\,{\mathbb C})\,=\,0$ for all $n\,>\,1$.) We identify $Z^1(F_p)\otimes \gfrak\,\cong \,\gfrak^{\oplus p}$ with 
the Lie algebra of $G^p\,\cong\,\HHom(F_p,\, G)$ and then use the left translation by $G^p$ to make this an $n$-form 
$\widetilde{\eta}_\Phi$ on $\HHom(F_p,\, G)\,=\,G^p$ with values in $\wedge^n Z^1(F_p,\,\CC)$. 

This is, besides being 
invariant under the left $G^p$-action, also invariant under simultaneous conjugation by $G$ (for $\Phi$ is 
$G$-invariant). The same is true for its exterior derivative and so $\widetilde{\eta}_\Phi$ is basic.

The restriction of $\widetilde{\eta}_\Phi$ to the closed subspace $\HHom(\Gamma,\, G)$ of $\HHom(F_p,\, G)$ is of course 
still basic. But it will now take its values in the subspace $\wedge^n Z^1(\Gamma)\,\subset\, \wedge^n Z^1(F_p,\,\CC)$ and 
$\eta_\Phi$ then appears after composition with the natural map $$\wedge^n Z^1(\Gamma,\,\CC)\,\longrightarrow\, \wedge^n 
H^1(\Gamma,\,\CC)\,\longrightarrow\, H^n(\Gamma,\,\CC ).$$ This proves that $\eta_\Phi$ is basic.

The naturality of the construction implies that $\eta_\Phi$ is $\Aut(\Gamma)$-invariant.
\end{proof}

\begin{remark}\label{rem:}
We already noted that $\underline\chi_G(\Gamma)$ can be regarded as the moduli stack of $G$-local systems on a 
reasonable space with fundamental group isomorphic to $\Gamma$. In case that space is a connected compact 
$C^\infty$-manifold $M$, there is a geometric interpretation of $H^1(\Gamma,\, \Ad\rho)$ in terms of flat connections 
that is rooted in Yang-Mills theory (as developed by Atiyah and Bott in \cite{AB}) and which in the present context was first 
explicated by Goldman \cite{Go}. For this, assume that we are given a universal cover $\widetilde M\,\longrightarrow\, M$ with group of deck 
transformations $\Gamma$ (acting on the left). Every $\rho\,\in \,\HHom(\Gamma,\, G)$ gives rise to a principal 
$G$-bundle $P_\rho$ on $M$ with a flat connection; it is obtained as the quotient of $\widetilde M\times G$ by the 
diagonal action of $\Gamma$ (acting on $G$ by left translations via $\rho$). But if we vary $\rho$, then the underlying 
$C^\infty$ principal bundle does not really change if we ignore the connection. In other words, if we fix a 
$\rho\,\in\,\HHom(\Gamma, \,G)$ and write $(P,\, \nabla)$ for $P_{\rho}$, then a variation of $\rho$ can be implemented by a 
variation of $\nabla$ as a flat connection. Any $C^\infty$-connection $\nabla'$ on $P$ is of the form $\nabla+A$ 
with $A$ being a section of $\Ecal_M^1\otimes \ad(P)$ and the connection $\nabla+A$ is flat if and only if $d_\nabla A+A\wedge A=0$ 
(the exterior derivative being taken with respect to the connection $\nabla$). For the first order deformations we 
can ignore the quadratic term $A\wedge A$ so that this becomes $d_\nabla A\,=\,0$. Those that come from an infinitesimal 
automorphism (and hence `do not count') are in the $d_\nabla$-image of a smooth section of $\ad(P)$ and so this 
realizes the space of genuine first order deformations of $\nabla$ with the de Rham cohomology space $H^1(M,\,\ad(P, 
\nabla))\,=\,H^1(M,\,\ad(P_\rho))$. The standard isomorphism
\begin{equation}\label{eqn:compare}
H^1(\Gamma,\, \Ad\rho)\xrightarrow{\,\,\cong\,\,}\, H^1(M, \,\ad(P_{\rho}))\end{equation}
thus acquires a geometric interpretation, namely as 
the tangent map at $\rho$ of the map which assigns to any $[\rho']\,\in\, \underline\chi_G(\Gamma)$, the isomorphism type of 
$P_{\rho'}$. We will encounter an analogue of it for $S$-valued points of $\underline\chi_{G}(\Gamma)$ in the next 
subsection.
\end{remark}

\subsection{A comparison}\label{subsect:comparison}

We return to the situation of Subsection \ref{subsect:relflat}. Since the issue that we 
will address is local on $S$, we assume that $S$ is a contractible Stein manifold, that 
$\Xcal$ is connected and that $f$ admits a holomorphic section 
$x\,:\,S\,\longrightarrow\, \Xcal$ such that $x^*\Pcal$ is trivial, i.e., 
$x^*\Pcal$ admits a holomorphic section $\widetilde x$. Let $p\,:\, 
\widetilde\Xcal\,\longrightarrow\, \Xcal$ be the universal cover defined by $x$ with 
group of deck transformations $\Gamma\,:=\,\pi_1(\widetilde\Xcal,\, x)$. Since $S$ is 
contractible, the map $p$ defines a universal cover of every fiber of $f$. The pull-back 
$p^*\Pcal$ comes with a connection relative to $f\circ p$ that is flat. Since the fibers 
of $f\circ p$ are simply connected, the global sections of $p^*\Pcal$ that are relatively 
flat identify this pull-back with $\widetilde\Xcal\times_S x^*\Pcal$. We use the 
section $\widetilde x$ of $ x^*\Pcal$ to identify this even with $\widetilde\Xcal 
\times G$. Via this isomorphism the $\Gamma$-action on $p^*\Pcal$ then becomes one on 
$\widetilde\Xcal \times G$. The latter action is given by a holomorphic holonomy map
$$\rho\, :\, S\,\longrightarrow \,
\HHom(\Gamma,\, G)\, , \ \ s\, \longmapsto\, \rho_s$$
in the sense that $\gamma\,\in\, \Gamma$ takes $(\widetilde{x},\, g)$ to $(\gamma.\widetilde x,\, \rho_{f(p(\widetilde x))}(\gamma)g)$.
A different 
choice of $ x$ or $\widetilde x$ alters $\rho$ by conjugation with a holomorphic map $S\,\longrightarrow\, G$, so that the 
resulting holomorphic map $\underline\rho\,:\, S\,\longrightarrow\, \underline\chi_{G}(\Gamma)$ is intrinsic to the situation.

The $\Gamma$-cover $\widetilde\Xcal$ of $\Xcal$ defines a natural homotopy class $\kappa$ of maps from $\Xcal $ to a
classifying space for $\Gamma$. It has its own characteristic algebra, given by the natural map:
\[
\kappa^* \,:\, H^\pt(\Gamma)\,\longrightarrow\, H^\pt (\Xcal).
\]
Proposition \ref{prop:comparison} and Corollary \ref{cor:closed} below imply the remaining assertions of Theorem \ref{thm:main}.

\begin{proposition}\label{prop:comparison}
If we regard $\kappa^*$ as a homomorphism of trivial local systems on $S$:
\[
\kappa^*\,:\, \underline{H^\pt(\Gamma)}_S\,\,\longrightarrow\,\, R^\pt 
f_*\underline{\ZZ}_\Xcal ,
\]
then $1\otimes\kappa^*$ takes $\rho^*\eta_\Phi\,\in\, H^0(S,\,\Omega_S^{n})\otimes H^{n}(\Gamma,\, 
\CC)$ to $\Phi(\At_{\Xcal/S}^\nabla(\Pcal))$, where $\At_{\Xcal/S}^\nabla(\Pcal)$ and $\eta$ are as in Lemma \ref{lemma:Atrel}
and Proposition \ref{prop:eta} respectively.
\end{proposition}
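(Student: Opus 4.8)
The plan is to verify the asserted equality of sections of $\Omega^n_S\otimes R^nf_*\underline{\CC}_\Xcal$ pointwise on $S$: it suffices to fix $s\in S$, identify the stalk $(R^nf_*\underline{\CC}_\Xcal)_s$ with $H^n(X_s,\CC)$, and show that both sides agree as $H^n(X_s,\CC)$-valued alternating $n$-forms on $T_sS$. Writing $\rho_s\in\HHom(\Gamma,G)$ for the holonomy of the flat bundle on the fibre $X_s$ and $P_{\rho_s}$ for the corresponding flat $G$-bundle, the comparison isomorphism \eqref{eqn:compare} identifies $H^1(\Gamma,\Ad\rho_s)$ with $H^1(X_s,\ad(P_{\rho_s}))$, while $\kappa^*$ identifies $H^n(\Gamma,\CC)$ with its image in $H^n(X_s,\CC)$. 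With these identifications in place the equality reduces to two compatibilities, one infinitesimal and one multiplicative.

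The first and central step is to identify the derivative of the holonomy map with the relative Atiyah class. The derivative of $\underline\rho\colon S\to\underline\chi_G(\Gamma)$ at $s$ is a linear map $T_sS\to H^1(\Gamma,\Ad\rho_s)$, computed by differentiating holonomy cocycles using the first-order description $\sigma(\gamma_1\gamma_2)=\sigma(\gamma_1)+\rho(\gamma_1)\sigma(\gamma_2)\rho(\gamma_1)^{-1}$ recalled in Subsection \ref{subsect:charstack}. On the other hand, $\At^\nabla_{\Xcal/S}(\Pcal)$, regarded as the Kodaira-Spencer homomorphism \eqref{eqn:ks}, sends $v\in T_sS$ to a class in $H^1(X_s,\ad(P_{\rho_s}))$ represented by the $(0,1)$-component of the curvature of a $C^\infty$-lift $\widetilde\nabla$ contracted with a lift of $v$. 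First I would show that these two classes correspond under \eqref{eqn:compare}: lifting $v$ to a vector field on a neighbourhood of $X_s$ and trivializing the family of flat bundles along it over the universal cover $\widetilde\Xcal$ via the section $\widetilde x$, the infinitesimal variation of the holonomy representation is computed by integrating the varying connection form along the deck transformations, which is exactly the de Rham representative of the Atiyah class. This is the infinitesimal, relative counterpart of the geometric interpretation of \eqref{eqn:compare} given in Remark \ref{rem:}, now carried out in families over $S$.

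The second step is multiplicativity. The isomorphism \eqref{eqn:compare} is induced by $\kappa$ and is compatible with cup products, so that the pairing $\wedge^n H^1(\Gamma,\Ad\rho_s)\to H^n(\Gamma,(\Ad\rho_s)^{\otimes n})$ corresponds under $\kappa^*$ to $\wedge^n H^1(X_s,\ad P_{\rho_s})\to H^n(X_s,(\ad P_{\rho_s})^{\otimes n})$, while $\Phi$, viewed as the $G$-invariant pairing $\sym^n\gfrak\to\CC$, induces the coefficient map on both sides compatibly. Combining this with the first step, the evaluation of $1\otimes\kappa^*(\rho^*\eta_\Phi)$ on $v_1,\dots,v_n$ equals $\Phi$ applied to the cup product of the classes $\At^\nabla_{\Xcal/S}(\Pcal)(v_i)$ in $H^n(X_s,\CC)$, which is precisely the evaluation of $\Phi(\At^\nabla_{\Xcal/S}(\Pcal))$. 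Since $s$ and the $v_i$ are arbitrary, the two sections coincide.

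The hard part will be the first step: matching the explicit group-cohomology cocycle for the derivative of $\rho$ with the Dolbeault representative of the relative Atiyah class. Both describe the same Kodaira-Spencer class, but through genuinely different models—holonomy of a varying flat connection on the universal cover versus the $\overline\partial$-closed $(1,1)$-component of the curvature of a smooth absolute lift $\widetilde\nabla$—so the argument must pass carefully between the two, tracking the trivialization by $\widetilde x$ and the conventions fixing \eqref{eqn:compare}. Once this identification is in hand, the remaining compatibilities are formal consequences of the naturality and multiplicativity of $\kappa^*$ and of the functoriality of cup products.
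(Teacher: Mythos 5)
Your proposal follows essentially the same route as the paper: reduce to the compatibility, under the comparison isomorphism \eqref{eqn:compare} (in its family version $\theta_{\underline\chi_{G}(\Gamma),S}\,\longrightarrow\, R^1f_*\ad(\Pcal)^\nabla$), of the derivative of the holonomy map $\underline\rho$ with the Kodaira--Spencer homomorphism \eqref{eqn:ks}, the cup-product/$\Phi$ step being formal. The paper in fact leaves precisely the step you flag as hard --- matching the group-cohomology cocycle of the varying holonomy with the curvature representative of $\At^\nabla_{\Xcal/S}(\Pcal)$ --- ``to the reader,'' so your outline is, if anything, slightly more explicit than the published argument.
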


\begin{proof}
For $s\,\in\, S$, put $X_s\,:=\,f^{-1}(s)$ and let $(P_s/X_s,\, \nabla_s)$ be the restriction $(\Pcal,\,\nabla)$ to 
$X_s$. We have the Kodaira-Spencer homomorphism 
$$\At^\nabla_{\Xcal/S}(\Pcal)\,:\, \theta_S \,\longrightarrow\, R^1f_*\ad(\Pcal)^\nabla$$
in \eqref{eqn:ks}, whereas the variation of the local system defines a similar 
homomorphism from $\theta_S$ its Zariski tangent bundle $\theta_{\underline\chi_{G}(\Gamma),S}$ when considered as a 
$S$-valued point (for a closed point $s$ this would be $H^1(\Gamma, \,\Ad\rho_s)$). As noted above, the isomorphism 
$H^1(\Gamma,\, \Ad \rho_s)\,\xrightarrow{\,\cong\,}\, H^1(X_s,\, \ad(P_s))$ has an interpretation as an isomorphism 
of tangent spaces. Over $S$ this gives rise to an isomorphism 
$\theta_{\underline\chi_{G}(\Gamma),S}\,\longrightarrow\, R^1f_*\ad(\Pcal)^\nabla$ and what remains to show is that 
this is compatible with the homomorphisms from $\theta_S$ to its source and target. The proof of this is left to the 
reader.
\end{proof}

\begin{corollary}\label{cor:closed}
The map $\eta$ in Proposition \ref{prop:eta} takes values in the \emph{closed} $H^\pt(\Gamma,\,
\CC)$-valued forms on $\underline{\chi}_G(\Gamma)$.
\end{corollary}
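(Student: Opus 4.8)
The plan is to deduce closedness from the two results just established by testing $\eta_\Phi$ against a single tautological family. Closedness of $\eta_\Phi$ is a local question on $\HHom(\Gamma,G)$, so fix a point $\rho_0$ and let $\Phi$ be homogeneous of degree $n$. Choose once and for all a connected closed manifold $M$ together with an identification $\pi_1(M)\cong\Gamma$, and write $\kappa\colon M\to B\Gamma$ for the resulting classifying map. Over a neighbourhood $S$ of $\rho_0$ in $\HHom(\Gamma,G)$ form the topologically trivial family $f\colon\Xcal=M\times S\to S$ together with the \emph{tautological} flat principal bundle $\Pcal=(\widetilde M\times G\times S)/\Gamma$, where $\Gamma$ acts by deck transformations on the universal cover $\widetilde M$ and, at each $s\in S$, through the tautological representation on $G$; let $\nabla$ be the associated relative flat connection. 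This is exactly the $S$-valued point of $\underline\chi_G(\Gamma)$ attached (as in Subsection \ref{subsect:charstack}) to the inclusion $S\hookrightarrow\HHom(\Gamma,G)$; in particular its holonomy map is $\rho=\mathrm{id}_S$, so that $\rho^*\eta_\Phi$ is simply the restriction $\eta_\Phi|_S$. Since the constructions of Section \ref{sect:charforms}, and in particular the closedness in Corollary \ref{cor2:}, are differential-geometric and require $f$ only to be a proper smooth submersion (as remarked in the introduction), we are free to take $M$ smooth here.

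Applying Proposition \ref{prop:comparison} to this family gives $(1\otimes\kappa^*)(\eta_\Phi|_S)=\Phi(\At^\nabla_{\Xcal/S}(\Pcal))$, where $\kappa^*\colon H^n(\Gamma,\CC)\to H^0(S,R^nf_*\CC)$ is the coefficient map. By Corollary \ref{cor2:} the right-hand side is a \emph{closed} $R^nf_*\CC$-valued $n$-form. As $\kappa^*$ is a fixed $\CC$-linear map it commutes with the exterior derivative on $S$, so applying $d$ yields $(1\otimes\kappa^*)(d\eta_\Phi|_S)=d\,\Phi(\At^\nabla_{\Xcal/S}(\Pcal))=0$. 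Thus the obstruction $d\eta_\Phi$ lies, near $\rho_0$, in the kernel of $1\otimes\kappa^*$.

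It remains to kill that kernel, and this is the one point requiring care. The form $\eta_\Phi$, hence also $d\eta_\Phi$, takes values in the subspace $U_n\subseteq H^n(\Gamma,\CC)$ spanned by $n$-fold cup products of classes of $H^1(\Gamma,\CC)$, which is finite-dimensional since $\Gamma$ is finitely generated. Because $R^nf_*\CC=\underline{H^n(M;\CC)}_S$ is the constant sheaf and $\Omega^{n+1}_S$ is a $\CC$-vector space, the map $1\otimes\kappa^*$ is injective on $\Omega^{n+1}_S\otimes U_n$ as soon as $\kappa^*$ is injective on $U_n$. We are therefore free to choose $M$ so that the classifying map $\kappa\colon M\to B\Gamma$ is $n$-connected: then $\kappa^*$ is an isomorphism in degrees $<n$ and injective in degree $n$, which is more than enough. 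Granting such an $M$, injectivity forces $d\eta_\Phi|_S=0$, and since $\rho_0$ was arbitrary we conclude $d\eta_\Phi=0$; together with Proposition \ref{prop:eta} this exhibits $\eta_\Phi$ as a genuine closed form on the stack $\underline\chi_G(\Gamma)$.

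The main obstacle is precisely the existence of this model $M$: one must realize the finitely presented group $\Gamma$ as the fundamental group of a closed manifold whose classifying map is injective on $U_n$. For finitely presented $\Gamma$ this can be arranged by a standard thickening and surgery-below-the-middle-dimension argument — take a finite $2$-complex presenting $\Gamma$, attach cells to make the map to $B\Gamma$ highly connected through degree $n$, then replace the resulting finite complex by the boundary of a regular neighbourhood of an embedding into a high-dimensional Euclidean space — and since we only need injectivity in the single degree $n$ on the finite-dimensional space $U_n$, no hypothesis on $\Gamma$ beyond finite presentation (which, as noted, is harmless locally) is needed. A minor bookkeeping point, to be dispatched separately, is that $\HHom(\Gamma,G)$ may be singular or non-reduced; this causes no trouble, because the comparison of Proposition \ref{prop:comparison} holds for an arbitrary $S$-valued point, so we may take for $S$ the neighbourhood of $\rho_0$ inside $\HHom(\Gamma,G)$ itself and read off closedness of $\eta_\Phi$ there directly.
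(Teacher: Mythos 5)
Your overall strategy coincides with the paper's: localize on $\HHom(\Gamma,G)$ so that $\Gamma$ may be taken finitely presented, realize $\Gamma$ as $\pi_1$ of a compact manifold $M$, run the tautological family $M\times S\to S$ with its tautological flat bundle through Proposition \ref{prop:comparison} and Corollary \ref{cor2:}, and conclude that $1\otimes\kappa^*$ kills $d\eta_\Phi$. You also correctly isolate the point on which everything then hinges and which the paper passes over in silence: one must choose $M$ so that $\kappa^*$ is injective on the subspace $U_n\subseteq H^n(\Gamma,\CC)$ spanned by $n$-fold cup products of degree-one classes (note that the paper's own alternative model, a regular neighbourhood of a $2$-complex in $\CC^3$, has vanishing cohomology above degree $2$, so it cannot satisfy this for $n\ge 3$ unless $U_n=0$).

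The gap is in how you secure that injectivity. You propose a \emph{closed} manifold $M$ whose classifying map $\kappa\colon M\to B\Gamma$ is $n$-connected. Such an $M$ is a finite complex, and attaching cells of dimension $\ge n+1$ to kill $\pi_n,\pi_{n+1},\dots$ would produce a $K(\Gamma,1)$ with finite $n$-skeleton; so the existence of your $M$ forces $\Gamma$ to be of type $F_n$, which fails for some finitely presented groups (the Stallings--Bieri groups are finitely presented but not of type $F_3$, and for each $n$ there are groups of type $F_n$ but not $F_{n+1}$). Your step ``attach cells to make the map to $B\Gamma$ highly connected through degree $n$'' is exactly where this breaks: infinitely many cells may be needed, after which there is no finite complex left to thicken. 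The weaker statement you actually need---injectivity of $\kappa^*$ on the finite-dimensional space $U_n$---is true, but requires a different argument: exhaust $B\Gamma$ by finite subcomplexes $L$ containing its finite $2$-skeleton; since the $H^{n-1}(L;\CC)$ are finite-dimensional vector spaces, the relevant $\varprojlim^1$ vanishes and $H^n(B\Gamma;\CC)$ injects into $\varprojlim_L H^n(L;\CC)$, so some finite $L$ already detects all of $U_n$, and one may take for $M$ a regular neighbourhood (or, in high codimension, its boundary) of an embedding of $L$ into Euclidean space. You assert that finite presentation suffices, which is correct, but the construction you offer does not deliver it. A secondary, repairable point: with $M$ merely a smooth manifold and $S$ a possibly singular analytic subspace, Corollary \ref{cor2:} and Proposition \ref{prop:comparison} do not apply as literally stated (they are formulated for holomorphic submersions of complex manifolds), so you are tacitly invoking an extension of Section \ref{sect:charforms} to $C^\infty$ fibers that would itself need to be checked.
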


\begin{proof}
We observed that the characteristic stack $\underline\chi_G(\gamma)$ is locally realized by a finitely presented group that 
has $\Gamma$ as a quotient and so there is no loss in generality in assuming that $\Gamma$ is already of that type. 
If $\Gamma$ appears as the fundamental group of compact complex manifold, then Proposition \ref{prop:comparison} gives the 
corollary for such a $\Gamma$. At this point we can finish the proof by invoking a theorem of Taubes, 
\cite[Theorem\, 1.1]{Ta}, which implies that any finitely presented group is isomorphic to the fundamental group a 
compact complex manifold of dimension three (which here appears as the twistor space of an
anti-self-dual Riemannian $4$-manifold). Instead 
of relying on such a deep theorem, one might alternatively observe that Proposition \ref{prop:comparison} and its 
proof are also valid in case $\Xcal\,\longrightarrow\, S$ is a trivial family with fiber a compact complex
manifold $M$ with 
boundary. In that case it is clear that any finitely presented group can be realized as the fundamental group of 
such a manifold: just take a finite cell complex $K$ with that fundamental group, embed $K$ in $\CC^3$ and take for 
$M$ be a closed regular neighborhood of its image.
\end{proof}

\begin{remark}\label{rem3:}
It would of course be more satisfying to have a direct proof of Corollary \ref{cor:closed}. For this it is perhaps useful to 
point out that the corollary does not hold without tensoring with $H^\pt(\Gamma,\,\CC)$ (which can be zero). For example, if
$\Gamma\,=\,F_p$ with $p\,\ge\, 2$, then the image of the Killing form on $\slin(2,\CC)$ in $\HHom(\Gamma,\, \PSL_2(\CC))$ will
not be closed, but after tensoring with $H^2(F_p,\,{\mathbb C})\,=\,0$ it is of course trivially so.
\end{remark}

%%%%%%%%%%%%%%%%%%%%%%%%%%%%%%%%%%%%%%%%%%%%%%%%%%%%%%%%%%%%%%%%%


\begin{thebibliography}{ZZZZ}

\bibitem[At]{At} M.~F.~Atiyah, 
\textit{Complex analytic connections in fibre
bundles,} Trans.\ Amer.\ Math.\ Soc.\ \textbf{85} (1957), 181--207.

\bibitem[AB]{AB} M.~F.~Atiyah and R.~Bott, 
\textit{The Yang-Mills equations over Riemann surfaces,}
Philos.\ Trans.\ Roy.\ Soc.\ London \textbf{308} (1983), 523--615.

\bibitem[BL]{BL} I. Biswas and E. Looijenga, On the projective structures on Riemann surfaces,
{\it Pure Appl. Math. Quart.} (to appear).

\bibitem[BE]{BE} S.~Bloch and H.~Esnault, 
\textit{Relative algebraic differential characters,} in
\textsl{Motives, polylogarithms and Hodge theory, Part I} (Irvine, CA, 1998), 47--73,
Int.\ Press Lect.\ Ser., 3, I, Int.\ Press, Somerville, MA, 2002. 

\bibitem[CherS]{CherS} S.-S. Chern and J. Simons, {\it Characteristic forms and geometric
invariants}, Ann. of Math. {\bf 99} (1974), 48--69. 

\bibitem[CheeS]{CheeS} J.~Cheeger and J.~Simons, 
\textit{Differential characters and geometric
invariants,} in \textsl{Geometry and topology} (College Park, Md.\ 1983/84), 50--80,
Lecture Notes in Math.\ \textbf{1167}, Springer, Berlin, 1985. 

\bibitem[De1]{deligne:ss} P.~Deligne, \textit{Th\'eor\`eme de Lefschetz et crit\`eres de 
d\'eg\'en\'erescence de suites spectrales,} Inst.\ Hautes \'Etudes Sci.\ Publ.\ Math.\ 
\textbf{35} (1968), 259--278.

\bibitem[De2]{deligne}
P.~Deligne, \textit{\'Equations diff\'erentielles \`a points singuliers r\'eguliers,}
Lecture Notes in Math.\ \textbf{163}, Springer-Verlag, Berlin-New York, 1970.

\bibitem[Go]{Go} W. M. Goldman, 
\textit{The symplectic nature of fundamental groups of surfaces,} 
Adv.\ in Math. \textbf{54} (1984), 200--225.

\bibitem[Gr]{Gr} H.~Grauert, 
\textit{Ein Theorem der analytischen Garbentheorie und die 
Modulr\"aume komplexer Strukturen,} Inst.\ Hautes \'Etudes Sci.\ Publ.\ Math.\ \textbf{5} 
(1960).

\bibitem[GHJW]{GHJW}
K.~Guruprasad, J.~Huebschmann, L.~Jeffrey and A.~Weinstein,
{\it Group systems, groupoids, and moduli spaces of parabolic bundles}, 
Duke Math.\ J.\ \textbf{89} (1997), 377--412. 

\bibitem[MS]{MS} J.~W.~Milnor and J.~D.~Stasheff, \textit{Characteristic classes,} Annals of 
Mathematics Studies, \textbf{76} Princeton University Press, Princeton, N.J., 1974.

\bibitem[Si]{Si} A.~S.~Sikora, \textit{Character varieties,} Trans.\ Amer.\ Math.\ Soc.\ 
\textbf{364} (2012), 5173--5208.

\bibitem[Ta]{Ta} C. H. Taubes, \textit{The existence of anti-self-dual conformal structures,}
J.~Differential Geom.\ \textbf{36} (1992), 163--253. 

\end{thebibliography}
\end{document}